\documentclass[preprint,11pt]{elsarticle}

\usepackage{amssymb,amscd,amsmath,amsthm,color}

\usepackage[T1]{fontenc}
\usepackage{lmodern}
\usepackage{amsmath,amssymb,amsthm,mathtools}
\usepackage[expansion=false]{microtype}
\usepackage{xurl}
\usepackage{needspace}
\usepackage[hidelinks]{hyperref}
\newtheorem{theorem}{Theorem}[section]
\newtheorem{corollary}[theorem]{Corollary}
\newtheorem{proposition}[theorem]{Proposition}
\numberwithin{equation}{section}
\newcommand{\Md}{\mathbb M_d}
\newcommand{\Mdh}{\mathbb M_d^{\mathrm h}}
\newcommand{\Mdp}{\mathbb M_d^+}
\newcommand{\Tr}{\operatorname{Tr}}
\newcommand{\ran}{\operatorname{Ran}}
\newcommand{\rank}{\operatorname{rank}}
\newcommand{\diag}{\operatorname{diag}}
\newcommand{\one}{\mathbf 1}

\newcommand{\join}{\vee}
\newcommand{\meet}{\wedge}
\newcommand{\bigjoin}{\mathop{\bigvee}}
\makeatletter
\let\ps@pprintTitle\ps@empty
\makeatother
\begin{document}
\begin{frontmatter}
\title{A Grassmann formula for the spectral order of matrices}
\author{Mohamed Amine Aouichaoui}
\ead{amine.aouichaoui@fsm.rnu.tn}
\address{Department of Mathematics, Faculty of Sciences of Monastir,\newline
University of Monastir, Monastir, Tunisia}

\author{Eun-Young Lee}
\ead{eylee89@knu.ac.kr}
\address{Department of Mathematics, KNU-Center for Nonlinear Dynamics,
Kyungpook National University,
 Daegu 702-701, Korea}

\begin{abstract}
For Hermitian matrices $A$ and $B$, we prove that
$(A\join B)\oplus(A\meet B)$ is unitarily equivalent to $A\oplus B$,
where the join and meet are taken in Olson's spectral order.
Taking traces answers  a question of Bourin and Lee: for positive
semidefinite matrices, $\Tr(A\join B)=\Tr(A+B)$ if and only if
$A\meet B=0$. Iterating the direct-sum identity gives a trace formula
for a finite family of positive matrices. The trace of the spectral
supremum equals the trace of the sum precisely when the ranges form
an algebraic direct sum. For each fixed $1<p<\infty$, the spectral supremum and the sum have
equal Schatten $p$-norms if and only if the ranges are pairwise
orthogonal.  Finally we establish an elegant  formula
for the Frobenius inner product and the spectral order.
\end{abstract}

\begin{keyword}
Spectral order \sep Positive matrix \sep Unitary equivalence \sep
Trace equality \sep Schatten norm
\MSC[2020] 15A18 \sep 15A42 \sep 15A60
\end{keyword}
\end{frontmatter}

\section{Introduction}\label{sec:introduction}

Bourin and Lee~\cite[Corollary~3.8]{BL} proved  the weak majorization
\begin{equation}\label{eq:known-comparison}
 A\join B\prec_w A+B
\end{equation}
for positive semidefinite matrices $A$ and $B$, where $A\join B$
is their supremum in Olson's spectral order~\cite{Olson}. This  natural order endows the set of Hermitian operators with a lattice structure,
extending the lattice of projections,
and \eqref{eq:known-comparison} means that $$\|A\join B\|\leq\|A+B\|$$ for every unitarily invariant (or symmetric) norm, thereby completing a series of majorizations due to Ando \cite[Section 6]{Ando}.

Bourin and Lee's  question at the end of their Example 3.4 (Question 3.5 in the Arxiv version) asks whether
\begin{equation}\label{eq:trace-question}
 \Tr(A\join B)=\Tr(A+B)
 \quad\Longleftrightarrow\quad A\meet B=0.
\end{equation}
Here $A\meet B$ is the infimum in the same order. As noted in that
question, $A\meet B=0$ is equivalent to
$\ran A\cap\ran B=\{0\}$.

We prove that the join and meet together have the same eigenvalues,
with the same multiplicities, as the original pair. More precisely,
for any two Hermitian matrices,
\begin{equation}\label{eq:intro-identity}
 (A\join B)\oplus(A\meet B)\simeq A\oplus B.
\end{equation}
 where $\simeq$ stands for the unitarily congruence relation. Taking two projections and traces, we have the classical Grassmann dimension formula, and our
 proof uses this formula  applied to
spectral subspaces. The resulting trace identity
\[
 \Tr(A+B)-\Tr(A\join B)=\Tr(A\meet B)
\]
proves~\eqref{eq:trace-question} for positive matrices.

For a finite family, successive joins and meets give a corresponding
direct-sum identity. For positive matrices, it shows that the trace
of the supremum equals the trace of the sum exactly when the ranges
form an algebraic direct sum. We also treat equality of Schatten
$p$-norms. When $1<p<\infty$, equal norms instead force the ranges
to be pairwise orthogonal. The proof combines the direct-sum identity
with the classical pinching formula~\cite[Section~1]{Pinching}
and the strict convexity of Schatten norms~\cite{BCL}.

We write $\Md$ for the complex $d\times d$ matrices, $\Mdh$ for
the Hermitian matrices, and $\Mdp$ for the positive semidefinite
matrices. All traces are unnormalized. The notation $X\simeq Y$
means that $X=UYU^*$ for a unitary matrix $U$. For Hermitian
matrices $X,Y\in\mathbb M_n$, the relation $X\prec_w Y$ means
\[
 \sum_{j=1}^k\lambda_j(X)\leq\sum_{j=1}^k\lambda_j(Y)
 \quad(1\leq k\leq n),
\]
where eigenvalues are listed in nonincreasing order. For
$1\leq p<\infty$, the Schatten $p$-norm is
\[
 \|X\|_p=(\Tr|X|^p)^{1/p},
 \qquad |X|=(X^*X)^{1/2}.
\]
We use the standard results on majorization and unitarily invariant
norms in~\cite[Chapters~II and~IV]{Bhatia}.

Section~\ref{sec:trace} proves the direct-sum identity \eqref{eq:intro-identity}, extends it to
finite families, and characterizes trace equality for positive matrices.
Section~\ref{sec:norms} treats equality in Schatten norms and gives an
exact formula for the difference of the squared Hilbert--Schmidt norms.

\section{Spectra and trace equalities}\label{sec:trace}

For $X\in\Mdh$ and $t\in\mathbb R$, put
$E_X(t)=\one_{(t,\infty)}(X)$,  the spectral projection of $X$ corresponding to eigenvalues $>t$. The spectral order is defined by
\[
 A\preceq B
 \quad\Longleftrightarrow\quad
 E_A(t)\leq E_B(t)\quad(t\in\mathbb R).
\]
For orthogonal projections $P,Q$, the projections $P\join Q$ and
$P\meet Q$ have ranges $\ran P+\ran Q$ and
$\ran P\cap\ran Q$, respectively.
In finite dimension, Olson's lattice formulas give
\begin{equation}\label{eq:spectral-formulas}
 \begin{aligned}
 E_{A\join B}(t)&=E_A(t)\join E_B(t),\\
 E_{A\meet B}(t)&=E_A(t)\meet E_B(t),
 \qquad t\in\mathbb R.
 \end{aligned}
\end{equation}
See~\cite{Olson} and~\cite[Remark~3.3]{BL}. The positive-matrix
formulas extend to Hermitian matrices by adding a sufficiently large
scalar multiple of the identity. Since matrix spectral families are
step functions, both formulas hold at the same threshold $t$.
For positive matrices they also give the familiar range formulas
\begin{equation}\label{eq:range-formulas}
 \ran(A\join B)=\ran A+\ran B,
 \qquad
 \ran(A\meet B)=\ran A\cap\ran B.
\end{equation}

\begin{theorem}\label{thm:two}
For every $A,B\in\Mdh$,
\begin{equation}\label{eq:two-identity}
 (A\join B)\oplus(A\meet B)\simeq A\oplus B.
\end{equation}
\end{theorem}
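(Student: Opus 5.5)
Two Hermitian matrices are unitarily equivalent exactly when they have the same eigenvalues with multiplicities. Equivalently, for every $t\in\mathbb R$ the numbers of eigenvalues $>t$ agree, i.e.\ the ranks of the spectral projections $E_X(t)$ coincide. The plan is to verify this counting condition for both sides of \eqref{eq:two-identity}. This uses the formulas \eqref{eq:spectral-formulas} together with the Grassmann dimension formula.

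First, for a direct sum $X\oplus Y$ one has $E_{X\oplus Y}(t)=E_X(t)\oplus E_Y(t)$, hence
\[
\rank E_{X\oplus Y}(t)=\rank E_X(t)+\rank E_Y(t).
\]
Applying this to both sides, it suffices to show, for every $t\in\mathbb R$,
\[
\rank E_{A\join B}(t)+\rank E_{A\meet B}(t)=\rank E_A(t)+\rank E_B(t).
\]
By \eqref{eq:spectral-formulas}, $E_{A\join B}(t)=E_A(t)\join E_B(t)$ and $E_{A\meet B}(t)=E_A(t)\meet E_B(t)$. These are the projections onto $\ran E_A(t)+\ran E_B(t)$ and $\ran E_A(t)\cap\ran E_B(t)$, respectively. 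The Grassmann formula $\dim(U+V)+\dim(U\cap V)=\dim U+\dim V$ with $U=\ran E_A(t)$ and $V=\ran E_B(t)$ then gives the required equality of ranks for each $t$.

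Finally, the function $N_X(t)=\rank E_X(t)$ is right-continuous and nonincreasing. Its jump at $\lambda$ equals the multiplicity of $\lambda$ as an eigenvalue of $X$. Since $N_{(A\join B)\oplus(A\meet B)}=N_{A\oplus B}$ identically, the two $2d\times 2d$ Hermitian matrices have the same spectrum with multiplicities. Diagonalizing both then yields a unitary $U$ with $(A\join B)\oplus(A\meet B)=U(A\oplus B)U^*$.

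The only delicate point is the validity of \eqref{eq:spectral-formulas} at every threshold $t$ for general Hermitian (not necessarily positive) $A,B$. I would take this from the preceding discussion, via the shift by a scalar multiple of the identity. Everything else is routine linear algebra.
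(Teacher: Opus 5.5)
Your proposal is correct and follows the paper's proof essentially step for step. You apply the Grassmann formula to $E_A(t)$ and $E_B(t)$, combine it with the spectral-projection formulas to get equal eigenvalue-counting functions for the two direct sums, and conclude unitary equivalence; your remark that $E_{X\oplus Y}(t)=E_X(t)\oplus E_Y(t)$ simply makes explicit a step the paper leaves implicit.
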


\begin{proof}
Fix $t\in\mathbb R$ and let $P=E_A(t)$ and $Q=E_B(t)$.
The Grassmann dimension formula gives
\[
 \rank(P\join Q)+\rank(P\meet Q)=\rank P+\rank Q.
\]
By~\eqref{eq:spectral-formulas},
\[
 \rank E_{A\join B}(t)+\rank E_{A\meet B}(t)
 =\rank E_A(t)+\rank E_B(t).
\]
For a Hermitian matrix $X$, the number $\rank E_X(t)$ counts its
eigenvalues strictly larger than $t$. The two matrices
in~\eqref{eq:two-identity} therefore have the same eigenvalue-counting
function on $\mathbb R$. Their eigenvalues agree, including
multiplicities, and the spectral theorem gives the unitary equivalence.
\end{proof}

In particular, the spectra of $A\join B$ and $A\meet B$ lie in
$\sigma(A)\cup\sigma(B)$. Applying any complex-valued function $f$
on this finite set to~\eqref{eq:two-identity} gives
\[
 f(A\join B)\oplus f(A\meet B)\simeq f(A)\oplus f(B).
\]
Thus
\begin{equation}\label{eq:functional-trace}
 \Tr f(A\join B)+\Tr f(A\meet B)=\Tr f(A)+\Tr f(B).
\end{equation}

\begin{corollary}\label{cor:two-trace}
For $A,B\in\Mdp$,
\begin{equation}\label{eq:two-trace}
 \Tr(A+B)-\Tr(A\join B)=\Tr(A\meet B).
\end{equation}
Consequently,
\[
 \Tr(A\join B)=\Tr(A+B)
 \quad\Longleftrightarrow\quad A\meet B=0
 \quad\Longleftrightarrow\quad \ran A\cap\ran B=\{0\}.
\]
\end{corollary}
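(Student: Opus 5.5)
The plan is to take traces in Theorem~\ref{thm:two}, or equivalently to apply \eqref{eq:functional-trace} with $f(x)=x$. Since the trace is additive over direct sums and invariant under unitary equivalence, this gives
\[
 \Tr(A\join B)+\Tr(A\meet B)=\Tr A+\Tr B=\Tr(A+B).
\]
Rearranging yields \eqref{eq:two-trace}. This step needs no positivity and holds for all $A,B\in\Mdh$.

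For the first equivalence, \eqref{eq:two-trace} shows that $\Tr(A\join B)=\Tr(A+B)$ holds exactly when $\Tr(A\meet B)=0$. Positivity enters here. I first check that $A\meet B\in\Mdp$. By \eqref{eq:spectral-formulas}, for $t<0$ we have $E_{A\meet B}(t)=E_A(t)\meet E_B(t)=I\meet I=I$, so every eigenvalue of $A\meet B$ is at least $0$. Alternatively, Theorem~\ref{thm:two} says the eigenvalues of $A\meet B$ are among those of $A\oplus B$, and these are all nonnegative. A positive semidefinite matrix has zero trace only if all its eigenvalues vanish, that is, only if it is $0$. The converse direction is trivial.

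For the second equivalence, I use the range formula \eqref{eq:range-formulas}: $\ran(A\meet B)=\ran A\cap\ran B$. A matrix is zero exactly when its range is $\{0\}$, so $A\meet B=0$ if and only if $\ran A\cap\ran B=\{0\}$. The only point requiring care is positivity of $A\meet B$, which the spectral-projection formula handles directly, so I expect no real obstacle.
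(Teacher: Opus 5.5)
Your proposal is correct and matches the paper's proof: take $f(t)=t$ in \eqref{eq:functional-trace}, use positivity of $A\meet B$ for the first equivalence, and use $\ran(A\meet B)=\ran A\cap\ran B$ from \eqref{eq:range-formulas} for the second. The paper simply asserts that $A\meet B\in\Mdp$, and your check that $E_{A\meet B}(t)=I$ for $t<0$ supplies that step.
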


\begin{proof}
Take $f(t)=t$ in~\eqref{eq:functional-trace}. Since $A\meet B$ is
positive, its trace is zero exactly when it is the zero matrix.
The last equivalence is the one recalled
in~\cite[Question~3.5]{BL}.
\end{proof}

We next apply Theorem~\ref{thm:two} to a finite family. Let $m\geq2$
and let $A_1,\ldots,A_m\in\Mdh$. Define
\begin{equation}\label{eq:recursion}
 C_1=A_1,
 \qquad C_j=C_{j-1}\join A_j,
 \qquad D_j=C_{j-1}\meet A_j
 \quad(2\leq j\leq m).
\end{equation}
Then $C_m=\bigjoin_{j=1}^m A_j$.

\begin{corollary}\label{cor:family}
With the notation in~\eqref{eq:recursion},
\begin{equation}\label{eq:family-identity}
 C_m\oplus D_2\oplus\cdots\oplus D_m
 \simeq A_1\oplus\cdots\oplus A_m.
\end{equation}
For every complex-valued function $f$ on
$\bigcup_{j=1}^m\sigma(A_j)$,
\begin{equation}\label{eq:family-functional}
 \Tr f(C_m)+\sum_{j=2}^m\Tr f(D_j)
 =\sum_{j=1}^m\Tr f(A_j).
\end{equation}
\end{corollary}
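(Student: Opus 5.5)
The plan is to argue by induction on $m$, applying Theorem~\ref{thm:two} at each step of the recursion~\eqref{eq:recursion}. For $m=2$ we have $C_2=A_1\join A_2$ and $D_2=A_1\meet A_2$, so~\eqref{eq:family-identity} is exactly Theorem~\ref{thm:two}. For the inductive step, assume
\[
 C_{m-1}\oplus D_2\oplus\cdots\oplus D_{m-1}\simeq A_1\oplus\cdots\oplus A_{m-1}.
\]
Apply Theorem~\ref{thm:two} to the pair $C_{m-1},A_m\in\Mdh$ to get
\[
 C_m\oplus D_m=(C_{m-1}\join A_m)\oplus(C_{m-1}\meet A_m)\simeq C_{m-1}\oplus A_m.
\]

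Two elementary facts about $\simeq$ finish the step. First, if $X\simeq X'$ and $Y\simeq Y'$, then $X\oplus Y\simeq X'\oplus Y'$, using the block-diagonal unitary $U\oplus V$. Second, a direct sum is unitarily equivalent to any permutation of its summands, via a permutation unitary. Using these, we rearrange
\[
 C_m\oplus D_2\oplus\cdots\oplus D_m \simeq (C_m\oplus D_m)\oplus D_2\oplus\cdots\oplus D_{m-1}
 \simeq C_{m-1}\oplus A_m\oplus D_2\oplus\cdots\oplus D_{m-1},
\]
and then apply the induction hypothesis to the summands other than $A_m$. This yields $A_1\oplus\cdots\oplus A_m$ up to reordering, which proves~\eqref{eq:family-identity}. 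The only point needing care is the bookkeeping of block sizes: every summand is $d\times d$, so both sides of~\eqref{eq:family-identity} are $md\times md$.

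For~\eqref{eq:family-functional}, note that unitary equivalence preserves spectra. So by~\eqref{eq:family-identity} the spectra of $C_m$ and all $D_j$ lie in $\bigcup_j\sigma(A_j)$, and $f$ is defined on every matrix involved. Functional calculus commutes with unitary conjugation ($f(UXU^*)=Uf(X)U^*$) and with direct sums. Hence
\[
 f(C_m)\oplus\bigoplus_{j\ge2} f(D_j)\simeq \bigoplus_j f(A_j).
\]
Taking traces, which are additive over direct sums and invariant under $\simeq$, gives~\eqref{eq:family-functional}.

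No step is a genuine obstacle. The mildest subtlety is the spectrum containment needed for $f$ to be defined on the $D_j$, and it follows directly from~\eqref{eq:family-identity} as noted above.
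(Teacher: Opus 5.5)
Your proposal is correct and matches the paper's proof. The paper applies Theorem~\ref{thm:two} to each pair $(C_{j-1},A_j)$, substitutes these identities successively starting from $j=m$, and permutes the direct-sum blocks, which is your induction unrolled. The functional identity then follows in both versions from the same spectrum-containment, functional-calculus and trace argument.
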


\begin{proof}
At each step, Theorem~\ref{thm:two} gives
\[
 C_j\oplus D_j\simeq C_{j-1}\oplus A_j.
\]
Starting with $j=m$, substitute these identities successively and
permute the direct-sum blocks. This gives~\eqref{eq:family-identity}.
Its left-hand side has no eigenvalues outside
$\bigcup_j\sigma(A_j)$. Applying $f$ and taking traces
proves~\eqref{eq:family-functional}.
\end{proof}

\Needspace{7\baselineskip}
\begin{corollary}\label{cor:family-trace}
Let $A_1,\ldots,A_m\in\Mdp$, and write
$C=C_m$ and $S=\sum_{j=1}^m A_j$. Then
\begin{equation}\label{eq:trace-difference}
 \Tr S-\Tr C=\sum_{j=2}^m\Tr D_j.
\end{equation}
Moreover, $\Tr C=\Tr S$ if and only if $D_2=\cdots=D_m=0$,
or equivalently, if and only if $\ran A_1+\cdots+\ran A_m$
is an algebraic direct sum.
\end{corollary}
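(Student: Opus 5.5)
The plan is to take $f(t)=t$ in \eqref{eq:family-functional} of Corollary~\ref{cor:family}. Since $\Tr$ is additive, $\sum_j\Tr A_j=\Tr S$. This gives
\[
\Tr C+\sum_{j=2}^m\Tr D_j=\Tr S,
\]
which is \eqref{eq:trace-difference}.

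For the first equivalence, note that the spectral order preserves positivity. By \eqref{eq:spectral-formulas} at thresholds $t<0$, each $E_{C_j}(t)$ and $E_{D_j}(t)$ equals the identity, because $E_{A_j}(t)=\one$ for all $j$. Hence every $C_j$ and $D_j$ is in $\Mdp$, by induction on $j$. A sum of traces of positive matrices vanishes iff each trace vanishes, and a positive matrix has zero trace iff it is zero. Therefore $\Tr C=\Tr S$ iff $D_2=\cdots=D_m=0$.

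For the range characterization, I will use \eqref{eq:range-formulas} inductively, applied to the positive matrices $C_{j-1}$ and $A_j$. This gives $\ran C_j=\ran A_1+\cdots+\ran A_j$ and
\[
\ran D_j=(\ran A_1+\cdots+\ran A_{j-1})\cap\ran A_j .
\]
So $D_j=0$ for all $j\ge2$ iff $(\ran A_1+\cdots+\ran A_{j-1})\cap\ran A_j=\{0\}$ for each $j$. It remains to check the standard linear algebra fact that this condition characterizes an algebraic direct sum.

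Forward direction: if $\sum_j x_j=0$ with $x_j\in\ran A_j$ and not all $x_j=0$, let $k$ be the largest index with $x_k\neq0$. Then $x_k=-\sum_{j<k}x_j$ is a nonzero vector lying in the $k$-th intersection, which is impossible.

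Converse: a nonzero vector in some intersection produces a nontrivial representation of $0$. Alternatively, the dimension count $\dim\sum_j\ran A_j=\sum_j\dim\ran A_j$ follows by iterating the Grassmann formula.

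The only mildly delicate step is justifying the range formulas for $D_j$ in this iterated setting. This reduces to the positivity of the $C_j$ shown above, so I expect no real obstacle.
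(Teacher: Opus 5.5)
Your proposal is correct and follows the paper's proof: take $f(t)=t$ in \eqref{eq:family-functional}, use positivity of the $D_j$ to get the first equivalence, and apply \eqref{eq:range-formulas} inductively to identify $\ran D_j$ with $(\ran A_1+\cdots+\ran A_{j-1})\cap\ran A_j$. The paper only asserts that the $D_j$ are positive and that these vanishing intersections characterize a direct sum; you add explicit checks of both, which is fine.
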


\begin{proof}
Equation~\eqref{eq:trace-difference} follows by taking $f(t)=t$
in~\eqref{eq:family-functional}. All the $D_j$ are positive, so the
right-hand side vanishes exactly when each $D_j$ vanishes.
The range formulas~\eqref{eq:range-formulas} yield
\[
 \ran C_j=\sum_{i=1}^j\ran A_i,
 \qquad
 \ran D_j=
 \left(\sum_{i=1}^{j-1}\ran A_i\right)\cap\ran A_j.
\]
Thus all the $D_j$ vanish precisely when the sum of the ranges is
a direct sum.
\end{proof}

The direct sum in this corollary need not be orthogonal. For three
or more matrices, pairwise zero intersections do not suffice. For
example, take the rank-one projections
\[
 P=\begin{pmatrix}1&0\\0&0\end{pmatrix},
 \qquad Q=\begin{pmatrix}0&0\\0&1\end{pmatrix},
 \qquad R=\frac12\begin{pmatrix}1&1\\1&1\end{pmatrix}.
\]
Their ranges are three distinct lines in $\mathbb C^2$, so every
pair of ranges has zero intersection. Nevertheless,
$P\join Q\join R=I_2$ and
\[
 \Tr(P\join Q\join R)=2<3=\Tr(P+Q+R).
\]

\section{Schatten $p$-norms and spectral order}\label{sec:norms}

For a matrix $H$ partitioned into $m$ square diagonal blocks of the
same size, let $\mathcal E_m(H)$ denote its block-diagonal part.
The classical pinching formula expresses $\mathcal E_m(H)$ as an
average of $m$ unitary conjugates of $H$, one of which is $H$
itself~\cite[Section~1]{Pinching}. In particular,
\[
 \|\mathcal E_m(H)\|_p\leq\|H\|_p
 \qquad(1\leq p<\infty).
\]
For $1<p<\infty$, the Schatten $p$-norm is strictly
convex~\cite{BCL}. Equality in this pinching inequality therefore
holds exactly when $H=\mathcal E_m(H)$.

\Needspace{7\baselineskip}
\begin{theorem}\label{thm:schatten}
Let $m\geq2$, $A_1,\ldots,A_m\in\Mdp$,
$C=\bigjoin_{j=1}^m A_j$, and $S=\sum_{j=1}^m A_j$.
For each fixed $1<p<\infty$,
\[
 \|C\|_p=\|S\|_p
 \quad\Longleftrightarrow\quad
 A_iA_j=0\quad(i\ne j).
\]
These conditions are also equivalent to $C=S$.
\end{theorem}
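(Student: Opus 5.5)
The plan is to compare both $\|C\|_p$ and $\|S\|_p$ with the $p$-norm of the block-diagonal matrix $A_1\oplus\cdots\oplus A_m$. Corollary~\ref{cor:family} controls $C$, and a pinching argument controls $S$. The two resulting inequalities chain together to give $\|C\|_p\le\|S\|_p$, and equality will force both the $D_j$ and the off-diagonal blocks to vanish.

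First I factor $S$ through a tall matrix. Put $T\in\mathbb M_{md\times d}$ as the block column with entries $A_1^{1/2},\dots,A_m^{1/2}$. Then $T^*T=S$, while $H:=TT^*$ is the $m\times m$ block matrix with $(i,j)$ block $A_i^{1/2}A_j^{1/2}$. Since $T^*T$ and $TT^*$ have the same nonzero eigenvalues, $\|S\|_p=\|H\|_p$. The diagonal blocks of $H$ are the $A_j$, so $\mathcal E_m(H)=A_1\oplus\cdots\oplus A_m$. The pinching inequality recalled above then gives
\[
 \|A_1\oplus\cdots\oplus A_m\|_p\le\|S\|_p .
\]
Second, Corollary~\ref{cor:family} gives $A_1\oplus\cdots\oplus A_m\simeq C\oplus D_2\oplus\cdots\oplus D_m$. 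Taking $f(t)=t^p$ in \eqref{eq:family-functional}, and using that all the matrices involved are positive, yields
\[
 \|C\|_p^p+\sum_{j\ge2}\|D_j\|_p^p=\sum_j\|A_j\|_p^p\le\|S\|_p^p .
\]
Hence $\|C\|_p\le\|S\|_p$.

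If $\|C\|_p=\|S\|_p$, then equality holds in the pinching step. By strict convexity for $1<p<\infty$, as recalled before the theorem, this forces $H=\mathcal E_m(H)$, that is, $A_i^{1/2}A_j^{1/2}=0$ for $i\ne j$. Multiplying on the left by $A_i^{1/2}$ and on the right by $A_j^{1/2}$ then gives $A_iA_j=0$. (Equality also gives $D_j=0$, but this is not needed for the conclusion.)

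For the converse, assume $A_iA_j=0$ for $i\ne j$. The ranges are then pairwise orthogonal and the $A_j$ commute. For $t\ge0$ the projections $E_{A_j}(t)$ have pairwise orthogonal ranges, so their join is their sum. On the other hand, $S$ acts as $A_j$ on $\ran A_j$ and as $0$ on the orthocomplement of $\sum_j\ran A_j$, so $E_S(t)=\sum_jE_{A_j}(t)$ for $t\ge0$. For $t<0$ every one of these projections is $I$. Iterating \eqref{eq:spectral-formulas} gives $E_C(t)=\bigjoin_jE_{A_j}(t)=E_S(t)$ for all $t$, hence $C=S$. Finally, $C=S$ trivially implies $\|C\|_p=\|S\|_p$, which closes the cycle of equivalences.

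The main obstacle is the equality case of the pinching inequality. This requires the strict convexity input from \cite{BCL}, applied to the average of $m$ unitary conjugates of $H$ in which $H$ itself appears. Since the paper states this consequence just before the theorem, I would invoke it directly. The remaining steps are routine verifications.
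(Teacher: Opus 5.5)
Your proposal is correct and follows the paper's proof essentially step for step. It uses the same factorization of $S$ (your column $T$ is just the transpose of the paper's row $T$), and the same chain $\|C\|_p\le\|A_1\oplus\cdots\oplus A_m\|_p=\|\mathcal E_m(H)\|_p\le\|H\|_p=\|S\|_p$ via Corollary~\ref{cor:family} and pinching. The equality case again goes through strict convexity, and the converse uses the same spectral-projection argument; you explicitly check the thresholds $t<0$, which the paper leaves implicit.
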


\begin{proof}
Use the matrices $D_j$ from~\eqref{eq:recursion}, and set
\[
 T=\begin{pmatrix}A_1^{1/2}&\cdots&A_m^{1/2}\end{pmatrix},
 \qquad H=T^*T,
 \qquad L=A_1\oplus\cdots\oplus A_m.
\]
Here $T$ has size $d\times md$. Since $TT^*=S$, the singular-value
decomposition gives
\[
 H\simeq S\oplus0_{(m-1)d},
 \qquad \mathcal E_m(H)=L.
\]
By~\eqref{eq:family-identity},
$\|L\|_p^p=\|C\|_p^p+\sum_{j=2}^m\|D_j\|_p^p$.
It follows that
\begin{equation}\label{eq:norm-chain}
 \|C\|_p\leq\|L\|_p
 =\|\mathcal E_m(H)\|_p
 \leq\|H\|_p=\|S\|_p.
\end{equation}
If $\|C\|_p=\|S\|_p$, equality holds throughout~\eqref{eq:norm-chain}.
The equality condition for pinching gives $H=\mathcal E_m(H)$.
Its off-diagonal blocks must therefore vanish:
\[
 A_i^{1/2}A_j^{1/2}=0\qquad(i\ne j).
\]
Multiplication on the left by $A_i^{1/2}$ and on the right by
$A_j^{1/2}$ gives $A_iA_j=0$ for $i\ne j$.

Conversely, if $A_iA_j=0$ for $i\ne j$, the ranges of the $A_j$
are pairwise orthogonal. Hence,
for every $t\geq0$,
\[
 E_S(t)=\sum_{j=1}^m E_{A_j}(t)
       =\bigjoin_{j=1}^m E_{A_j}(t)=E_C(t).
\]
Thus $C=S$, which in turn implies equality of the norms.
\end{proof}

For $p=1$, equality is characterized by
Corollary~\ref{cor:family-trace}, since all matrices involved are
positive. For $1<p<\infty$, Theorem~\ref{thm:schatten} shows that
equality for one exponent gives equality for every unitarily
invariant norm: the two matrices are then equal.
The upper restriction on $p$ matters. If
$A=\diag(2,1)$ and $B=\diag(0,1)$, then $A\join B=A$ and
\[
 \|A\join B\|_\infty=\|A+B\|_\infty=2,
 \qquad AB\ne0,
\]
where $\|\cdot\|_\infty$ denotes the operator norm.

For the Hilbert--Schmidt norm, the difference can be computed from
the successive meets. Taking $f(t)=t^2$
in~\eqref{eq:family-functional} and expanding $\Tr S^2$ gives
\begin{equation}\label{eq:hilbert-schmidt}
 \|S\|_2^2-\|C\|_2^2
 =\sum_{j=2}^m\|D_j\|_2^2
   +2\sum_{1\leq i<j\leq m}
          \|A_i^{1/2}A_j^{1/2}\|_2^2.
\end{equation}
Indeed, the terms involving two distinct indices in the expansion
are $2\Tr(A_iA_j)=2\|A_i^{1/2}A_j^{1/2}\|_2^2$.
For two positive matrices, this reads as an elegant formula for their Frobenius inner product in terms of the spectral order:

\begin{proposition} Let $A,B\in\Mdp$. then, 
\[
\Tr(AB)=\frac{1}{2}\left\{
 \|A+B\|_2^2-\|A\join B\|_2^2
 -\|A\meet B\|_2^2\right\}.
\]
\end{proposition}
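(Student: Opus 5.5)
The plan is to specialize the functional trace identity \eqref{eq:functional-trace} of Theorem~\ref{thm:two} to the quadratic function $f(t)=t^2$ and then expand $\Tr(A+B)^2$. This is just the case $m=2$ of \eqref{eq:hilbert-schmidt}. We argue directly, since the two-matrix case needs only Theorem~\ref{thm:two} and not the recursion \eqref{eq:recursion}.

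First, take $f(t)=t^2$ on $\sigma(A)\cup\sigma(B)$ in \eqref{eq:functional-trace}. This gives
\[
 \Tr (A\join B)^2+\Tr (A\meet B)^2=\Tr A^2+\Tr B^2 .
\]
All four matrices are Hermitian, so $\Tr X^2=\|X\|_2^2$ for each of them. The identity therefore reads
\[
 \|A\join B\|_2^2+\|A\meet B\|_2^2=\|A\|_2^2+\|B\|_2^2 .
\]

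Second, expand the left-hand side of the claim:
\[
 \|A+B\|_2^2=\Tr(A+B)^2=\Tr A^2+\Tr B^2+\Tr(AB)+\Tr(BA)=\|A\|_2^2+\|B\|_2^2+2\Tr(AB),
\]
using the cyclicity $\Tr(BA)=\Tr(AB)$.

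Subtracting the identity from the first step and dividing by $2$ yields
\[
 \Tr(AB)=\tfrac12\left\{\|A+B\|_2^2-\|A\join B\|_2^2-\|A\meet B\|_2^2\right\},
\]
which is the proposition.

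There is no real obstacle. The only points to check are that $f(t)=t^2$ is admissible in \eqref{eq:functional-trace}, which holds because any function on the finite set $\sigma(A)\cup\sigma(B)$ is allowed, and that $\|X\|_2^2=\Tr X^2$ for Hermitian $X$. Positivity of $A$ and $B$ is not actually needed for the identity; it only makes the left-hand side $\Tr(AB)=\|A^{1/2}B^{1/2}\|_2^2$ interpretable as a nonnegative Frobenius inner product.
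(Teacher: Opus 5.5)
Your proof is correct and follows the paper's own route: the proposition is the case $m=2$ of \eqref{eq:hilbert-schmidt}, which is obtained by taking $f(t)=t^2$ in the functional trace identity and expanding $\Tr(A+B)^2$. Your side remark is also right: Theorem~\ref{thm:two} holds on $\Mdh$ and $\Tr(AB)$ is real for Hermitian $A,B$, so the identity holds without assuming positivity.
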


To see the distinction between trace equality and Schatten norm
equality in dimension two, let $0<c<1$ and take
\[
 P=\begin{pmatrix}1&0\\0&0\end{pmatrix},
 \qquad
 Q=\begin{pmatrix}
 c^2&c\sqrt{1-c^2}\\
 c\sqrt{1-c^2}&1-c^2
 \end{pmatrix}.
\]
These are rank-one projections onto distinct, nonorthogonal lines.
Thus $P\join Q=I_2$ and $P\meet Q=0_2$, whereas $P+Q$ has
eigenvalues $1+c$ and $1-c$. The traces are equal, but for every
$p>1$ strict convexity of $t^p$ gives
\[
 \|P\join Q\|_p^p=2<(1+c)^p+(1-c)^p=\|P+Q\|_p^p.
\]
At $p=2$, the difference is $2c^2=2\Tr(PQ)$,
as in~\eqref{eq:hilbert-schmidt}.


\begin{thebibliography}{9}

\bibitem{Ando} 
T.\ Ando, Majorization, doubly stochastic matrices, and comparison of eigenvalues, Lin. Alg.
Appl. 118 (1989), 163--248. {\nolinkurl{doi:10.1016/0024-3795(89)90580-6}}

\bibitem{BCL}
K. Ball, E. A. Carlen, E. H. Lieb,
Sharp uniform convexity and smoothness inequalities for trace norms,
Invent. Math. 115 (1994), 463--482.
\href{https://doi.org/10.1007/BF01231769}{\nolinkurl{doi:10.1007/BF01231769}}.

\bibitem{Bhatia}
R. Bhatia,
\emph{Matrix Analysis},
Graduate Texts in Mathematics, vol.~169,
Springer, New York, 1997.
\href{https://doi.org/10.1007/978-1-4612-0653-8}{\nolinkurl{doi:10.1007/978-1-4612-0653-8}}.

\bibitem{Pinching}
R. Bhatia,
Pinching, trimming, truncating, and averaging of matrices,
Amer. Math. Monthly 107 (2000), no.~7, 602--608.
\href{https://doi.org/10.1080/00029890.2000.12005245}{\nolinkurl{doi:10.1080/00029890.2000.12005245}}.

\bibitem{BL}
J.-C. Bourin, E.-Y. Lee,
\emph{Averages over matrix unitary orbits and spectral order},
preprint, 2026.
\href{https://arxiv.org/abs/2606.15624v2}{arXiv:2606.15624v2}.

\bibitem{Olson}
M. P. Olson,
The selfadjoint operators of a von Neumann algebra form a
conditionally complete lattice,
Proc. Amer. Math. Soc. 28 (1971), no.~2, 537--544.
\href{https://doi.org/10.1090/S0002-9939-1971-0276788-1}{\nolinkurl{doi:10.1090/S0002-9939-1971-0276788-1}}.

\end{thebibliography}
\end{document}